\title[On the N{\'e}ron-Severi group of surfaces with many lines]{On the N{\'e}ron-Severi group \\ of surfaces with many lines}
\author{Samuel Boissi{\`e}re}
\address{Samuel Boissi{\`e}re, Laboratoire J.A.Dieudonn{\'e} UMR CNRS 6621,
         Universit{\'e} de Nice Sophia-Antipolis, Parc Valrose, 06108 Nice}
\email{samuel.boissiere@math.unice.fr}
\author{Alessandra Sarti}
\address{Alessandra Sarti, Johannes Gutenberg Universit{\"a}t Mainz,
Institut f{\"u}r Mathematik, 55099 Mainz, Germany}
\email{sarti@mathematik.uni-mainz.de}
\keywords{N{\'e}ron-Severi group, Picard number, lines on surfaces}
\subjclass{14J18,14J19}
\DeclareMathOperator{\NS}{\mathrm{NS}} \DeclareMathOperator{\LC}{\mathrm{LC}}
\DeclareMathOperator{\rank}{\mathrm{rk}} \DeclareMathOperator{\SL}{\mathrm{SL}} \DeclareMathOperator{\Km}{\mathrm{Km}}
\newcommand{\ii}{\mathrm{i}}
\newcommand{\IC}{\mathbb{C}}
\newcommand{\IP}{\mathbb{P}}
\newcommand{\IQ}{\mathbb{Q}}
\newcommand{\IZ}{\mathbb{Z}}
\newcommand{\cQ}{\mathcal{Q}}
\newcommand{\PC}{\mathbb{P}^3_{\mathbb{C}}}
\newtheorem{theorem}{Theorem}[section]
\newtheorem{lemma}[theorem]{Lemma}
\newtheorem{proposition}[theorem]{Proposition}
\newtheorem{remark}[theorem]{Remark}
\begin{document}

\maketitle

\begin{abstract}
For a binary quartic form $\phi$ without multiple factors, we classify the
quartic K3 surfaces $\phi(x,y)=\phi(z,t)$ whose N{\'e}ron-Severi group is
(rationally) generated by lines. For generic binary forms $\phi$, $\psi$ of prime
degree without multiple factors, we prove that the N{\'e}ron-Severi group of the
surface $\phi(x,y)=~\psi(z,t)$ is rationally generated by lines.

\end{abstract}

\section{Introduction}

The study of the N{\'e}ron-Severi group $\NS(S)$ of a given surface $S$ is
interesting for understanding its geometry, but it is not an easy task in
general. A first step is to compute its Picard number $\rho(S):=\rank\NS(S)$. A
second one is to give a family of generators of $\NS(S)$ over $\IZ$. To this
purpose, it is very useful to find first a nice family of generators of
$\NS(S)\otimes_{\IZ}\IQ$. If one already knows the value of the determinant of
$\NS(S)$, this can help deducing a family of generators. If not, the study of
the \emph{rational} generators gives non trivial information for the value of
the discriminant.

Let $\phi$ be a binary quartic form without multiple factors. After a suitable
linear change of coordinates, we may assume that $\phi$ is of the form:
$$
\phi(x,y)=yx(y-x)(y-\lambda x)
$$
for $\lambda\in\IC\setminus\{0,1\}$.
Naturally associated to $\phi$ are the K3 surface
$S_\phi:\phi(x,y)=\phi(z,t)$ and the elliptic curve $E_\phi:t^2=\phi(1,y)$.

\begin{remark}\label{remark:iso}
Observe that if $\phi,\phi'$ are the forms corresponding to $\lambda,\lambda'$ and $\lambda'$ is one of the values $\lambda,\frac{1}{\lambda},1-\lambda,\frac{1}{1-\lambda},\frac{\lambda}{\lambda-1},\frac{\lambda-1}{\lambda}$ then there is a linear isomorphism $S_\phi\cong S_{\phi'}$.
\end{remark}

The interplay between the geometry of the K3 surface $S_\phi$ and the arithmetic of
the elliptic curve $E_\phi$ has been studied by many authors. Of particular
interest is the link between the value of the Picard number $\rho(S_\phi)$ and
the existence of a complex multiplication on $E_\phi$. The following result is
classical (see \cite{Kuwata} and references therein):
$$
\rho(S_\phi)=\begin{cases} 20 & \text{if } E_\phi \text{ has a complex
multiplication,}\\ 19 &\text{otherwise}.\end{cases}
$$

We pursue the study by giving numerical conditions for the N{\'e}ron-Severi group
of $S_\phi$ to be \emph{rationally generated by lines}:

\noindent{\bf Notation -- Definition.} Let $S\subset\PC$ be a smooth surface of degree $d\geq 3$. If $L$ is a line contained
in $S$, by the genus formula the self-intersection of $L$ in $S$ is $L^2=-d+2$, so
the class of $L$ in $\NS(S)$ is not a torsion class. We denote by $\LC(S)$ the
sublattice of the torsion-free part of $\NS(S)$ generated by the classes of the
lines contained in $S$. For a generic surface $S$, it is well-known that
$\LC(S)=0$. If not, these classes are natural candidates as generators of
$\NS(S)$ and we say that $\NS(S)$ is \emph{rationally generated by lines} if
$\rank\LC(S)=\rho(S)$, that is $\LC(S)\otimes_{\IZ}\IQ=\NS(S)\otimes_{\IZ}\IQ$.

The most famous examples of surfaces whose N{\'e}ron-Severi group is rationally
generated by lines are certain Fermat surfaces (see \cite{ShiodaENS}). The
surfaces we study here are a natural generalization of them. We prove (\S\ref{proof:th:K3}):

\begin{theorem}\label{th:K3} The N{\'e}ron-Severi group of $S_\phi$ is rationally generated by
lines exactly in the following cases:
\begin{enumerate}
\item $\lambda\notin\overline{\IQ}$;

\item
$\lambda\in\{-1,2,\frac{1}{2},\frac{1+\ii\sqrt{3}}{2},\frac{1-\ii\sqrt{3}}{2}\}$;

\item
$\lambda\in\overline{\IQ}\setminus\{-1,2,\frac{1}{2},\frac{1+\ii\sqrt{3}}{2},\frac{1-\ii\sqrt{3}}{2}\}$
and $\rho(S_\phi)=19$.
\end{enumerate}
\end{theorem}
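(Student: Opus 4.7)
The plan is to exhibit an explicit family of lines on $S_\phi$, compute the rank of the sublattice $\LC(S_\phi)$ they span, and compare it to the value $\rho(S_\phi)\in\{19,20\}$ recalled above.

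First, for every ordered pair $(P,Q)$ of roots of $\phi$ in $\mathbb{P}^1$, the embedded line $(s:t)\mapsto(sP,tQ)\subset\PC$ is contained in $S_\phi$. This produces $16$ \emph{coordinate lines} arranged in a $4\times 4$ grid, with $\ell_{ij}\cdot\ell_{kl}=1$ iff the two indices share a row or a column. The row- and column-sums all equal the hyperplane class in $\NS(S_\phi)$, which gives a batch of linear relations, and a direct computation pins down the rank of this lattice; it is of course strictly smaller than $19$, so the grid alone is never enough. I would then enlarge the family by going through the Kummer/Inose comparison with $E_\phi\times E_\phi$ underlying the statement $\rho(S_\phi)\in\{19,20\}$: specific exceptional curves over the $2$-torsion points of $E_\phi$ and sections of suitable elliptic fibrations can be matched with explicit lines on $S_\phi$ (or with rational combinations thereof), showing that the $16$ coordinate lines already generate a sublattice of rank $19$ in $\NS(S_\phi)\otimes\IQ$.

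Second, I would look for lines on $S_\phi$ not of the coordinate type. Writing a line in $\PC$ as the image of $u\mapsto(a_0+a_1u:b_0+b_1u:c_0+c_1u:d_0+d_1u)$ and substituting into $\phi(x,y)-\phi(z,t)=0$ produces a polynomial system in the $a_i,b_i,c_i,d_i$ with coefficients in $\IQ(\lambda)$. By Remark~\ref{remark:iso} it is enough to analyze the system up to the $S_3$-action on cross-ratios. The expected answer is that non-coordinate lines appear precisely when the configuration $\{0,1,\infty,\lambda\}$ of roots of $\phi$ has an enlarged projective stabilizer, namely at the harmonic values $\lambda\in\{-1,2,\tfrac12\}$ (dihedral stabilizer of order $8$) and the equianharmonic values $\lambda=\tfrac{1\pm\ii\sqrt3}{2}$ (stabilizer $A_4$). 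At these five exceptional values I would write the extra lines explicitly and check in coordinates that adjoining them raises $\rank\LC(S_\phi)$ from $19$ to $20$.

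Combining the two steps yields the trichotomy. For $\lambda\notin\overline{\IQ}$ the curve $E_\phi$ has no complex multiplication, so $\rho(S_\phi)=19=\rank\LC(S_\phi)$, which is case (1). At the five exceptional algebraic values $E_\phi$ has CM, $\rho(S_\phi)=20$, and the extra lines provide the missing rank, which is case (2). For every other algebraic $\lambda$ the classification in Step 2 gives $\rank\LC(S_\phi)=19$, so $\LC$ rationally generates $\NS$ if and only if $\rho(S_\phi)=19$, which is case (3). The most delicate point is the classification of lines in Step 2: one must show that the resultant in $\lambda$ expressing the existence of a non-coordinate line vanishes exactly at the five values listed, and in particular at no other CM value of $j(E_\phi)$.
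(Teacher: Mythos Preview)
Your overall architecture---compute $\rank\LC(S_\phi)$ explicitly and compare with $\rho(S_\phi)\in\{19,20\}$---is exactly what the paper does, but your execution of both steps contains a real gap.

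\textbf{Step 1 is wrong as stated.} The $16$ coordinate lines $\ell_{ij}$ span a lattice of rank only $(4-1)^2+1=10$; this is the content of Proposition~\ref{prop:LCprime} (and is also immediate from the row/column relations you wrote down). No amount of matching with Kummer exceptional curves or fibration sections can make $16$ classes span a rank-$19$ lattice. The Kummer/Inose picture tells you about $\rho(S_\phi)$, not about $\rank\LC(S_\phi)$; to raise the latter you must actually produce more \emph{lines}.

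\textbf{Step 2 misidentifies when non-coordinate lines exist.} The projective stabilizer of $\{0,1,\infty,\lambda\}$ is never trivial: for generic $\lambda$ it is already the Klein four-group $D_2$, and each nontrivial $\sigma\in D_2$ contributes four lines of the form $(x:y)=p\cdot\sigma(z:t)$ with $p^4$ a fixed constant depending on $\sigma$. Thus $S_\phi$ contains $32$ lines for \emph{every} $\lambda$ outside the five special values, and it is the $32\times 32$ intersection matrix (independent of $\lambda$) that has rank $19$. The five values $\{-1,2,\tfrac12,\tfrac{1\pm\ii\sqrt3}{2}\}$ are not where non-coordinate lines first appear, but where the stabilizer enlarges to $D_4$ or $A_4$, producing $48$ or $64$ lines and pushing the rank to $20$.

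Once you correct this---take Segre's full list of $32$ lines in the generic case, compute rank $19$, then exhibit the extra $16$ (resp.\ $32$) lines at the harmonic (resp.\ equianharmonic) values and check rank $20$---your final paragraph deducing the trichotomy from the CM criterion is correct and matches the paper verbatim.
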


Looking now for a set of generators of the N\'eron-Severi group, we prove (\S\ref{s:discr}):

\begin{theorem}\label{th:overZ}
The N{\'e}ron-Severi group of $S_\phi$ is  generated by
lines only in case~$(2)$.
\end{theorem}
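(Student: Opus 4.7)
The plan is to reduce Theorem~\ref{th:overZ} to a comparison of lattice discriminants, exploiting the basis of lines already produced in the proof of Theorem~\ref{th:K3} together with standard structural facts about $\NS(S_\phi)$. By Theorem~\ref{th:K3}, in each of the cases~(1)--(3) the sublattice $\LC(S_\phi)\subset\NS(S_\phi)$ has full rank, hence finite index, so
\[
\operatorname{disc}\LC(S_\phi)=[\NS(S_\phi):\LC(S_\phi)]^{2}\cdot\operatorname{disc}\NS(S_\phi).
\]
The equality $\NS(S_\phi)=\LC(S_\phi)$ is thus equivalent to the equality of these two discriminants, and the first step is to compute $\operatorname{disc}\LC(S_\phi)$ as the Gram determinant of a basis of lines selected during the proof of Theorem~\ref{th:K3}.

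For case~(2) one has $\rho(S_\phi)=20$, and the listed values of $\lambda$ correspond to $E_\phi$ having complex multiplication by $\IZ[\ii]$ (for the orbit $\{-1,2,\tfrac12\}$) or by $\IZ[\zeta_{6}]$ (for the fixed points $\tfrac{1\pm\ii\sqrt{3}}{2}$). In both sub-cases $S_\phi$ is Shioda--Inose-related to $\Km(E_\phi\times E_\phi)$, so the transcendental lattice $T(S_\phi)$ and hence $\operatorname{disc}\NS(S_\phi)=-\operatorname{disc}T(S_\phi)$ are explicit small integers. The task is to match this value with the Gram determinant of step one, forcing $[\NS(S_\phi):\LC(S_\phi)]=1$.

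For cases~(1) and~(3) the target is instead $[\NS(S_\phi):\LC(S_\phi)]>1$, to be proved by exhibiting a specific class in $\NS(S_\phi)\setminus\LC(S_\phi)$. I would search the discriminant group $\LC(S_\phi)^{\ast}/\LC(S_\phi)$ for a class represented by a $\tfrac12$-integral combination $\alpha$ of line classes that pairs integrally with every line, and this is where I expect the main obstacle to lie: a purely lattice-theoretic even-overlattice search places $\alpha$ only in $\LC(S_\phi)^{\ast}$, whereas getting $\alpha$ inside $\NS(S_\phi)$ requires a genuine geometric input, most naturally by realising $\alpha$ as the pullback of an explicit divisor via the Kummer/Shioda--Inose correspondence between $S_\phi$ and $\Km(E_\phi\times E_\phi)$. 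Case~(1), with $\lambda$ transcendental, then reduces to case~(3) by specialisation, since the intersection numbers among the chosen lines are combinatorial and independent of $\lambda$.
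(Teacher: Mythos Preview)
Your plan for Case~(2) coincides with the paper's: compute the Gram determinant of an optimal basis of lines and compare it with $\det\NS(S_\phi)$, the latter obtained from the transcendental lattice via the Shioda--Inose/Kummer link to $\Km(E_\phi\times E_\phi)$ (for $\lambda\in\{-1,2,\tfrac12\}$ the paper shortcuts by identifying $S_\phi$ with the Fermat quartic).

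For Cases~(1) and~(3) the paper takes a different and more economical route that bypasses precisely the gap you flag. Instead of exhibiting an explicit class in $\NS(S_\phi)\setminus\LC(S_\phi)$, it argues purely lattice-theoretically: a computer search shows the optimal $19$-line sublattice $M$ has discriminant group $(\IZ/2)^{2}\oplus(\IZ/4)^{2}\oplus\IZ/8$, whose $2$-torsion part $(M^{\vee}/M)_{2}$ has rank~$5$. But $\NS$ and the transcendental lattice $T$ are mutual orthogonal complements in the unimodular K3 lattice, so $(\NS^{\vee}/\NS)_{2}\cong(T^{\vee}/T)_{2}$, and the latter has rank at most $\rank T=3$. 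Hence $M\subsetneq\NS(S_\phi)$, and since $M$ was already the line sublattice of smallest discriminant, no family of lines generates $\NS(S_\phi)$ over~$\IZ$. Because the intersection matrix of the $32$ lines is combinatorial and independent of~$\lambda$, this handles Cases~(1) and~(3) at once; no specialisation step is needed (and note that specialisation runs from transcendental to algebraic parameters, so your proposed reduction of~(1) to~(3) would in any case require the geometric construction of~$\alpha$ to be uniform in~$\lambda$, which is the same work in both cases). Your route via an explicit half-integral divisor realised through the Kummer correspondence is plausible but strictly harder; the paper's discriminant-rank comparison yields the strict inclusion for free, without ever naming the extra class.
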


Generalizing the construction, one can consider two binary forms $\phi,\psi$ of
degree $d$ without multiple factors and the associated surface
$S^d_{\phi,\psi}:\phi(x,y)=\psi(z,t)$. One can prove that
$\rho(S^d_{\phi,\psi})\geq (d-1)^2+1$ with equality for $d$ prime and
$\phi,\psi$ generic (see \cite{Sasakura}). We prove (\S\ref{s:prime}):

\begin{theorem}\label{th:prime}
For $d$ prime and $\phi,\psi$ generic, the N{\'e}ron-Severi group of
$S^d_{\phi,\psi}$ is rationally generated by lines.
\end{theorem}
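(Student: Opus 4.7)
The plan is to exhibit $d^2$ explicit lines on $S^d_{\phi,\psi}$ and show that their classes already span a rational subspace of $\NS(S^d_{\phi,\psi})$ of dimension $(d-1)^2+1$. Combined with Sasakura's equality $\rho(S^d_{\phi,\psi})=(d-1)^2+1$, valid for $d$ prime and generic $\phi,\psi$, this will force $\LC(S^d_{\phi,\psi})\otimes\IQ=\NS(S^d_{\phi,\psi})\otimes\IQ$ and prove the theorem.

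First I would factor $\phi(x,y)=\prod_{i=1}^d(b_ix-a_iy)$ and $\psi(z,t)=\prod_{j=1}^d(d_jz-c_jt)$, so that $[a_i:b_i]$ and $[c_j:d_j]$ are the (distinct) roots of $\phi$ and $\psi$ on $\IP^1$. For each pair $(i,j)\in\{1,\dots,d\}^2$ the line
$$
L_{ij}:=\{[a_iu:b_iu:c_jv:d_jv]\mid[u:v]\in\IP^1\}\subset\PC
$$
satisfies $\phi(x,y)\equiv 0\equiv\psi(z,t)$ identically along it, hence lies on $S^d_{\phi,\psi}$. A direct inspection of the parametrizations shows that two distinct lines $L_{ij}$ and $L_{i'j'}$ meet transversally at $[a_i:b_i:0:0]$ when $i=i'$, meet transversally at $[0:0:c_j:d_j]$ when $j=j'$, and are disjoint when $i\neq i'$ and $j\neq j'$.

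Combined with $L_{ij}^2=-d+2$ from adjunction, the Gram matrix of these $d^2$ lines, with rows and columns indexed by pairs $(i,j)$, takes the concise form
$$
A=-d\,I_{d^2}+I_d\otimes J_d+J_d\otimes I_d,
$$
where $J_d$ denotes the $d\times d$ all-ones matrix. Since $I_d$ and $J_d$ commute and $J_d$ has spectrum $\{d,0,\dots,0\}$, the eigenvalues of $A$ are $d$ with multiplicity $1$, $0$ with multiplicity $2(d-1)$, and $-d$ with multiplicity $(d-1)^2$; in particular $\rank A=(d-1)^2+1$.

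The rank of the Gram matrix is a lower bound for the dimension of the $\IQ$-span of the classes $[L_{ij}]$ in $\NS(S^d_{\phi,\psi})$, while this dimension is bounded above by the Picard number $(d-1)^2+1$. The two bounds coincide, which forces equality and finishes the argument. The only substantive step is the spectral calculation for $A$; the primality of $d$ intervenes only through Sasakura's formula for $\rho(S^d_{\phi,\psi})$, and the lower bound $\rank\LC(S^d_{\phi,\psi})\geq(d-1)^2+1$ in fact holds for every $d\geq 3$ and every $\phi,\psi$ without multiple factors.
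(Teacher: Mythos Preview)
Your argument is correct and follows exactly the paper's strategy: exhibit the same $d^2$ lines $L_{ij}$ joining a root of $\phi$ to a root of $\psi$, compute their Gram matrix, show its rank is $(d-1)^2+1$, and invoke Sasakura's equality $\rho(S^d_{\phi,\psi})=(d-1)^2+1$. The only difference is cosmetic: the paper records the Gram matrix as $K^d_{-d+2,1,1,0}$ and computes its rank via a bespoke minimal-polynomial calculation in Appendix~\ref{s:appmat}, whereas your Kronecker-sum expression $A=-d\,I_{d^2}+I_d\otimes J_d+J_d\otimes I_d$ yields the spectrum (hence the rank) directly from the eigenvalues of $J_d$.
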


In Theorem \ref{th:K3} we do not consider the quartics $S^4_{\phi,\psi}$ for
$\phi\neq\psi$ since, although $\rho(S^4_{\phi,\psi})=18$ (see again \cite{Kuwata}), Proposition \ref{prop:LCprime} below says that their 16 lines generate an
intersection matrix of rank 10, so such surfaces do not enter in our context.

{\it We thank the referee for helpful suggestions and comments.}

\section{Proof of Theorem \ref{th:K3}}
\label{proof:th:K3}

The result follows from the following proposition:
\begin{proposition}\label{prop:rankK3} If $\lambda
\in\{-1,2,\frac{1}{2},\frac{1+\ii\sqrt{3}}{2},\frac{1-\ii\sqrt{3}}{2}\}$, then
$\rank \LC(S_\phi)=20$, otherwise $\rank \LC(S_\phi)=19$.
\end{proposition}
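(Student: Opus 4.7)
The plan is to enumerate all lines on $S_\phi$ and compute the rank of their intersection matrix. Writing $\phi=L_0L_1L_2L_3$ with $L_0=y$, $L_1=x$, $L_2=y-x$, $L_3=y-\lambda x$, and examining which of $L_i(x,y), L_j(z,t)$ can vanish identically on a line $\ell\subset\PC$ contained in the surface, one sees that $\ell$ belongs to one of two families: the $16$ \emph{coordinate lines} $L_{ij}=\{L_i(x,y)=0\}\cap\{L_j(z,t)=0\}$, or the \emph{M\"obius lines} $\ell_A=\{(x{:}y)=A(z{:}t)\}$ with $A\in GL_2(\IC)$ normalized so that $\phi\circ A=\phi$. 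This normalization forces $[A]\in PGL_2$ to preserve the zero set $\{0,1,\infty,\lambda\}$ of $\phi(1,\cdot)$, and for each $[A]$ in the stabilizer $G_\lambda:=\mathrm{Stab}_{PGL_2}(\{0,1,\infty,\lambda\})$ there are exactly four such lifts (the four fourth roots of the scaling constant). Thus $S_\phi$ contains $16+4|G_\lambda|$ lines in total.

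Classical cross-ratio theory identifies $G_\lambda$: it always contains the Klein four-group $V_4$ of order $4$; it equals the dihedral group $D_4$ of order $8$ precisely in the harmonic case $\lambda\in\{-1,2,\tfrac12\}$; and it equals $A_4$ of order $12$ precisely in the equianharmonic case $\lambda\in\{\frac{1\pm\ii\sqrt 3}{2}\}$. So $S_\phi$ carries $32$, $48$, or $64$ lines respectively, and by Remark \ref{remark:iso} it suffices in the special cases to treat $\lambda=-1$ and $\lambda=\frac{1+\ii\sqrt 3}{2}$.

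Next I would assemble the Gram matrix: $L^2=-2$ for each line, and $L\cdot L'\in\{0,1\}$ for distinct lines according to whether they meet in $\PC$, a criterion decided directly by solving the two linear systems defining them. The target is rank $19$ in the generic case and rank $20$ in the special cases; the upper bound $\rank\LC(S_\phi)\leq\rho(S_\phi)\leq 20$ is automatic. The $16$ coordinate lines span a rank-$10$ sublattice (by the same computation that yields Proposition \ref{prop:LCprime}). In the generic case I would show that the $16$ M\"obius lines coming from $V_4$ contribute exactly $9$ further independent classes modulo this span, producing rank $19$. In the special cases, one exhibits at least one additional M\"obius line from $G_\lambda\setminus V_4$ whose class is independent of everything above, giving rank $20$.

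The main obstacle is the rank computation itself: the Gram matrix is of moderate size and the rank must be shown to be exactly $19$ (never larger) in the generic case. I would tame this by using the natural symmetries acting on the configuration---the $G_\lambda$-action on M\"obius lines by pre- and post-composition, the involution $(x,y)\leftrightarrow(z,t)$ which swaps $L_{ij}\leftrightarrow L_{ji}$ and sends $\ell_A$ to $\ell_{A^{-1}}$, and the $\mu_4$-scaling on lifts---to block-diagonalize the Gram matrix into isotypic components of small dimension, each of whose rank can be checked directly. A secondary subtlety is verifying completeness of the enumeration: any line in $S_\phi$ projecting non-constantly to both $(x{:}y)$ and $(z{:}t)$ yields a linear isomorphism between these factors, hence a M\"obius line; otherwise some $L_i(x,y)$ or $L_j(z,t)$ vanishes on it and it is a coordinate line.
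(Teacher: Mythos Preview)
Your proposal is correct and follows essentially the same approach as the paper: enumerate all lines on $S_\phi$ according to the stabilizer $G_\lambda$ of the four roots (the paper, following Segre, lists them explicitly in exactly the three cases $V_4$, $D_4$, $A_4$ you identify, giving $32$, $48$, $64$ lines), and then determine the rank of the resulting intersection matrix. The only difference is that the paper simply asserts the ranks after direct computation of the Gram matrix---noting it is independent of $\lambda$ within each case---whereas you sketch a symmetry-based block-diagonalization to obtain the same numbers.
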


\begin{proof}[Proof of Theorem \ref{th:K3}] Assuming Proposition
\ref{prop:rankK3}, we prove Theorem \ref{th:K3}. The key argument is that if
$E_\phi$ has a complex multiplication, then its $j$-invariant is algebraic over
$\overline{\IQ}$ (see \cite{Silverman}). Since
$j(E_\phi)=\frac{256(1-\lambda+\lambda^2)^3}{\lambda^2(\lambda-1)^2}$,
$j(E_\phi)\in\overline{\IQ}$ if and only if $\lambda\in \overline{\IQ}$. Then:

\noindent - If $\lambda\notin\overline{\IQ}$, $E_\phi$ has no complex
multiplication so $\rho(S_\phi)=19$ and by Proposition \ref{prop:rankK3},
$\rank\LC(S_\phi)=19$. This proves (1).

\noindent - If
$\lambda\in\{-1,2,\frac{1}{2},\frac{1+\ii\sqrt{3}}{2},\frac{1-\ii\sqrt{3}}{2}\}$,
by Proposition \ref{prop:rankK3} we have $\rank\LC(S_\phi)=20$ so
$\rho(S_\phi)=20$. This proves (2).

\noindent - If
$\lambda\in\overline{\IQ}\setminus\{-1,2,\frac{1}{2},\frac{1+\ii\sqrt{3}}{2},\frac{1-\ii\sqrt{3}}{2}\}$,
then $\rho(S_\phi)\in\{19,20\}$ and $\rank \LC(S_\phi)=19$. This gives (3).
\end{proof}

\begin{remark}
In case (3) of Theorem \ref{th:K3}, one can not be more precise since:
\begin{itemize}
\item When $j(E_\phi)\in\overline{\IQ}$ (so $\lambda\in\overline{\IQ}$), it is
not clear whether $E_\phi$ admits a complex multiplication or not.

\item There is a dense and numerable set of $\lambda\in\overline{\IQ}$ such
that $\rho(S_\phi)=20$ (see \cite{Oguiso}).
\end{itemize}
\end{remark}

\begin{proof}[Proof of Proposition \ref{prop:rankK3}] The description of the
lines on $S_\phi$ comes from Segre \cite{Segre}. We follow the presentation
given in \cite{BoissiereSarti}.

\medskip

\paragraph{\it Case 1.} If $\lambda\notin
\{-1,2,\frac{1}{2},\frac{1+\ii\sqrt{3}}{2},\frac{1-\ii\sqrt{3}}{2}\}$, the
group of automorphisms of $\IP^1_\IC$ permuting the set
$\{\infty,0,1,\lambda\}$ is the dihedral group $D_2=\{id,s_1,s_2,s_1s_2\}$ and
the surface $S_\phi$ contains exactly the following 32 lines:
\begin{align*}
\ell_z(u,v)\colon&\begin{cases} vx=uy\\vt=uz\end{cases} &
\ell_{id}(p)\colon&\begin{cases} x=pz\\y=pt\end{cases} &
\ell_{s_1}(p)\colon&\begin{cases} x=pz-pt\\y=\lambda
pz-pt\end{cases}\\
&\scriptstyle u,v\in\{\infty,0,1,\lambda\} & &\scriptstyle
p\in\{1,-1,\ii,-\ii\}&&\scriptstyle
p\in\{\frac{1}{\sqrt{\lambda-1}},\frac{-1}{\sqrt{\lambda-1}},\frac{\ii}{\sqrt{\lambda-1}},\frac{-\ii}{\sqrt{\lambda-1}}\}\\
\end{align*}
\begin{align*}
\ell_{s_2}(p)\colon&\begin{cases} x=pt\\y=\lambda pz\end{cases} &
\ell_{s_1s_2}(p)\colon&\begin{cases} x=-\lambda pz+pt\\y=-\lambda pz+\lambda pt\end{cases}\\
&\scriptstyle
p\in\{\frac{1}{\sqrt{\lambda}},\frac{-1}{\sqrt{\lambda}},\frac{\ii}{\sqrt{\lambda}},\frac{-\ii}{\sqrt{\lambda}}\}
&&\scriptstyle
p\in\{\frac{1}{\sqrt{\lambda^2-\lambda}},\frac{-1}{\sqrt{\lambda^2-\lambda}},\frac{\ii}{\sqrt{\lambda^2-\lambda}},\frac{-\ii}{\sqrt{\lambda^2-\lambda}}\}
\end{align*}

The intersection matrix of these 32 lines is easy to compute (we do not
reproduce it here), and is independent of $\lambda$. One finds that its rank is
$19$, so $\rank \LC(S_\phi)=19$.

\medskip

\paragraph{\it Case 2.} If $\lambda\in\{-1,2,\frac{1}{2}\}$, the surfaces are isomorphic to each other by Remark \ref{remark:iso}. The group
of automorphisms is the dihedral group $D_4=\langle D_2,r\rangle$. The surface
$S_\phi$ contains exactly 48 lines: the 32 preceding ones and  16 other lines. For $\lambda=-1$ for example, these lines are:
\begin{align*}
\ell_r(p)\colon&\begin{cases} x=pz+pt\\y=-pz+pt\end{cases} &
\ell_{r^{-1}}(p)\colon&\begin{cases} x=-pz+pt\\y=-pz-pt\end{cases}&\scriptstyle
p\in\{\frac{1+\ii}{2},\frac{1-\ii}{2},\frac{-1+\ii}{2},\frac{-1-\ii}{2}\}\\
\ell_{rs_1}(p)\colon&\begin{cases} x=pt\\y=pz\end{cases} &
\ell_{s_1r}(p)\colon&\begin{cases} x=-pz\\y=pt\end{cases} & \scriptstyle
p\in\{\frac{1+\ii}{\sqrt{2}},\frac{1-\ii}{\sqrt{2}},\frac{-1+\ii}{\sqrt{2}},\frac{-1-\ii}{\sqrt{2}}\}
\end{align*}
The rank of the intersection matrix of the 48 lines is $\rank
\LC(S_\phi)=20$.

\medskip

\paragraph{\it Case 3.} If
$\lambda\in\{\frac{1+\ii\sqrt{3}}{2},\frac{1-\ii\sqrt{3}}{2}\}$, the surfaces are isomorphic to each other by Remark \ref{remark:iso}. The group of
automorphisms is the tetrahedral group $T=~\langle r,s\rangle$. The surface
$S_\phi$ contains exactly the following 64 lines:
\begin{align*}
\ell_{z}(u,v)\colon&\begin{cases}vx=uy\\vt=uz\end{cases} &
\ell_{id}(p)\colon&\begin{cases} x=pz\\y=pt
\end{cases} & \substack{u,v\in\{\infty,0,1,\lambda\}\\p\in\{1,-1,\ii,-\ii\}}\\
\ell_{r}(p)\colon&\begin{cases} x=pz\\y=pz+\lambda^2 pt \end{cases} & \ell_{r^2}(p)&:\begin{cases} x=pz\\
y=\lambda pz-\lambda pt\end{cases} \\
\end{align*}
\begin{align*}
\ell_{s}(p)\colon&\begin{cases} x=pt\\y=\lambda pz \end{cases} &
\ell_{rs}(p)\colon&\begin{cases} x=pt\\y=-pz+pt
\end{cases} & \ell_{rsr}(p)&:\begin{cases} x=pz+\lambda^2 pt\\y=\lambda^2pt \end{cases} \\
\ell_{r^2s}(p)\colon&\begin{cases} x=pt\\y=-\lambda^2 pz+\lambda pt \end{cases} &
\ell_{sr}(p)\colon&\begin{cases} x=pz+\lambda^2 pt\\y=\lambda pz
\end{cases} & \scriptstyle p&\scriptstyle\in\{\lambda,-\lambda,\ii\lambda,-\ii\lambda\}
\end{align*}
\begin{align*}
\ell_{rsr^2s}(p)\colon&\begin{cases} x=-\lambda^2pz+\lambda
pt\\y=-\lambda^2pz+\lambda^2pt \end{cases} & \ell_{r^2srs}(p)\colon&\begin{cases}
x=-pz+pt\\y=-\lambda pz+pt
\end{cases} & \scriptstyle
p\in\{\lambda^2,-\lambda^2,\ii\lambda^2,-\ii\lambda^2\} \\
\ell_{srs}(p)\colon&\begin{cases}x=-pz+pt \\y=\lambda pt \end{cases} &
\ell_{rsrs}(p)\colon&\begin{cases} x=-pz+pt\\y=-pz \end{cases}
\end{align*}
The rank of the intersection matrix of the 64 lines is $\rank
\LC(S_\phi)=20$.

\end{proof}

\section{Proof of Theorem \ref{th:overZ}}
\label{s:discr}

As we explained in the Introduction, once one has found a nice family of
\emph{rational} generators of the N{\'e}ron-Severi group, the next task is to
get information on divisible classes. We call a divisor $\Lambda=\sum_{i=1}^{n} \alpha_i L_i$$\in \NS(S)$ {\it $2^m$-divisible} if the class of $\Lambda$ in $\NS(S)$ is divisible by $2^m$; for $m=1$ we say also that the lines in $\Lambda$ form an {\it even set}.

\begin{proof}[Proof of Theorem \ref{th:overZ}]\text{}
\paragraph{\it Cases $(1)$ and $(3)$.} For
$\lambda\notin\{-1,2,\frac{1}{2},\frac{1+\ii\sqrt{3}}{2},\frac{1-\ii\sqrt{3}}{2}\}$,
with the help of a computer program we obtain that the best choice of a family of 19 lines among the 32 generating
rationally the N{\'e}ron-Severi group gives a determinant of value $2^9$. Denoting this lattice by $M$ and its dual by $M^\vee$, the discriminant group is:
$$
M^{\vee}/M=(\IZ_2)^{\oplus 2}\oplus (\IZ_4)^{\oplus 2}\oplus \IZ_8
$$
hence we can have only $2^m$-divisible classes for $m=1,2,3$. Denote by $(M^{\vee}/M)_2$ the part of the discriminant group generated by the $2$-torsion classes. We have $(M^{\vee}/M)_2=(\IZ_2)^{\oplus 5}$ hence rank$(M^{\vee}/M)_2=5$. However, denoting by $T$ the transcendental lattice of $S_\phi$, $(\NS(S_\phi)^\vee/\NS(S_\phi))_2\cong (T^\vee/T)_2$  has rank at most the rank of $T$, which is three: This shows that $M\subsetneq \NS(S_\phi)$, and that there are at least two even sets of lines in the N{\'e}ron Severi group. In particular there is no set of $19$ lines generating $ \NS(S_\phi)$.

\medskip

\paragraph{\it Case $(2)$ for $\lambda\in\{-1,2,\frac{1}{2}\}$.} By Remark \ref{remark:iso}, the surfaces $S_\phi$ are isomorphic to each other. The best choice of a family of 20
lines among 48 gives a determinant of value $-2^6$. Observe that a suitable permutation of the zeros of $x^4-y^4$ in $\IP^1_\IC$ gives a cross-ratio equal to $-1$, so our surfaces are isomorphic to the Fermat quartic. It is then well-known that $\det\NS(S_\phi)=-64$, so the lines generate the N{\'e}ron-Severi group.

\medskip

\paragraph{\it Case $(2$) for $\lambda\in\{\frac{1+\ii\sqrt{3}}{2},\frac{1-\ii\sqrt{3}}{2}\}$.} A computer program shows that the best choice of a family of 20 lines among
the 64 contained in the surface, generating rationally the N{\'e}ron-Severi group,
gives a determinant of value $-2^4\cdot 3$. We show in Appendix \ref{s:Kummer} that $\det\NS(S_\phi)=-48$ so the lines generate the N\'eron-Severi group.
\end{proof}

\section{Proof of Theorem \ref{th:prime}}
\label{s:prime}

Since $\rho(S^d_{\phi,\psi})=(d-1)^2+1$ for $d$ prime and $\phi,\psi$ generic,
Theorem \ref{th:prime} follows from the following result:

\begin{proposition}\label{prop:LCprime} It is $\rank \LC(S^d_{\phi,\psi})=(d-1)^2+1$.
\end{proposition}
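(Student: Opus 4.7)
The plan is to exhibit $d^2$ explicit lines on $S^d_{\phi,\psi}$, describe their Gram matrix in Kronecker form, and show it has rank $(d-1)^2+1$. Combined with the upper bound $\rho(S^d_{\phi,\psi})=(d-1)^2+1$ recalled in the introduction, this will yield the claimed equality.

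After a linear change of coordinates I may write $\phi(x,y)=\prod_{i=1}^d(y-\alpha_i x)$ and $\psi(z,t)=\prod_{j=1}^d(t-\beta_j z)$ with the $\alpha_i$ pairwise distinct and the $\beta_j$ pairwise distinct. For each pair $(i,j)$ the line
\[
\ell_{ij}\colon \quad y=\alpha_i x, \quad t=\beta_j z
\]
lies on $S^d_{\phi,\psi}$, producing $d^2$ distinct lines. I then compute the intersection numbers. By adjunction $\ell_{ij}^2=2-d$. If $i=i'$ and $j\neq j'$, both lines lie in the plane $\{y=\alpha_i x\}$, whose intersection with $S^d_{\phi,\psi}$ decomposes as the $d$ lines $\ell_{ij''}$ (since $\phi(x,\alpha_i x)=0$), all passing through the single point $[1:\alpha_i:0:0]$; hence $\ell_{ij}\cdot\ell_{ij'}=1$, and symmetrically $\ell_{ij}\cdot\ell_{i'j}=1$ for $i\neq i'$. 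If both indices differ, the four defining linear equations force $x=y=z=t=0$, which is impossible in $\IP^3$, so $\ell_{ij}\cdot\ell_{i'j'}=0$.

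Arranging the $d^2$ lines in a $d\times d$ grid, the resulting Gram matrix takes the Kronecker form
\[
M = I_d\otimes J_d + J_d\otimes I_d - d\,I_{d^2},
\]
where $J_d$ is the all-ones $d\times d$ matrix. Since $J_d$ has eigenvalue $d$ on the vector $(1,\dots,1)$ and eigenvalue $0$ on the hyperplane of sum-zero vectors, the tensor-product eigenvectors $v\otimes w$ simultaneously diagonalize $M$, with eigenvalues $d$ (multiplicity $1$), $0$ (multiplicity $2(d-1)$), and $-d$ (multiplicity $(d-1)^2$). Consequently $\rank M = 1+(d-1)^2$, giving $\rank\LC(S^d_{\phi,\psi})\geq(d-1)^2+1$. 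Combined with the trivial inequality $\rank\LC\leq\rho(S^d_{\phi,\psi})=(d-1)^2+1$ for $d$ prime and $\phi,\psi$ generic (Sasakura), the claimed equality follows.

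The entire argument reduces to the linear algebra of a block matrix; the only mildly delicate step is the intersection count for coplanar pairs of lines, which drops out immediately once the shared plane $\{y=\alpha_i x\}$ or $\{t=\beta_j z\}$ is identified. I do not foresee any serious obstacle.
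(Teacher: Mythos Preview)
Your argument is correct and follows the same geometric route as the paper: the same $d^2$ lines $\ell_{ij}$ (the paper describes them as the lines joining a zero of $\phi$ on $\{z=t=0\}$ to a zero of $\psi$ on $\{x=y=0\}$), the same intersection numbers, and hence the same Gram matrix. The only difference is in the linear algebra: the paper packages the matrix as $K^d_{-d+2,1,1,0}$ and computes its rank via the minimal polynomial of an auxiliary matrix $K_d$ worked out in an appendix, whereas your Kronecker decomposition $M=I_d\otimes J_d+J_d\otimes I_d-dI_{d^2}$ and tensor-eigenvector diagonalization give the eigenvalues $d,0,-d$ with multiplicities $1,\,2(d-1),\,(d-1)^2$ in one line, which is tidier and makes the rank $(d-1)^2+1$ immediate.

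One small remark: your final sandwich argument $\rank M\le\rank\LC\le\rho=(d-1)^2+1$ uses the Sasakura bound, which requires $d$ prime and $\phi,\psi$ generic; the paper instead tacitly uses that for generic $\phi,\psi$ the $d^2$ lines $\ell_{ij}$ are \emph{all} the lines on $S^d_{\phi,\psi}$, so $\rank\LC=\rank M$ directly. Either way the conclusion is the same in the intended range of application.
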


\begin{proof}[Proof of Proposition \ref{prop:LCprime}] We set $S:=S^d_{\phi,\psi}$.
Let $L$ be the line $z=t=0$ and $L'$ be the line $x=y=0$. The intersection
$S\cap L$ is the set of zeros of $\phi$, whereas $S\cap L'$ is the set of zeros
of $\psi$. If $p\in L$ is a zero of $\phi$ and $q\in L'$ a zero of $\psi$, the
line $L_{p,q}$ joining $p$ and $q$ is contained in $S$: this gives a family of
$d^2$ lines contained in $S$. The intersection matrix of this family is given
by $L^2=-d+2$ and $L\cdot L'=1$ if $L$ and $L'$ intersect, $0$ otherwise. Note
that:
$$
(L_{p,q}\cap L_{p',q'}\neq \emptyset) \Longleftrightarrow (p=p' \text{ or }
q=q').
$$
This implies that after ordering correctly the lines, the intersection matrix
is the matrix $M_d:=K^d_{-d+2,1,1,0}$ (see the notation in Appendix
\ref{s:appmat}). Remark \ref{rem:invers} gives $\rank \LC(S)=\rank
M_d=(d-1)^2+1$.
\end{proof}

\appendix

\section{Some linear algebra}\label{s:appmat}

Let $a,b,c,d,\ldots$ denote indeterminates. For $d\geq 2$, let $J^d_{a,b}$ be
the $(d,d)$-matrix defined by:
$$
J^d_{a,b}:=\left(\begin{matrix}a & & \text{\Huge{$b$}}\\ & \ddots & \\
\text{\Huge{$b$}} & &a\end{matrix}\right) =b\cdot
\left(\text{\Huge{$1$}}\right)+(a-b)\cdot I_d
$$
where $I_d$ denotes the identity $(d,d)$-matrix. The following lemma is clear:
\begin{lemma}\label{lem:Jab} The following identities hold:
\begin{align*}
J^d_{a,b}+J^d_{a',b'}&=J^d_{a+a',b+b'};\\
J^d_{a,b}\cdot J^d_{a',b'}&=J^d_{aa'+(d-1)bb',ab'+a'b+(d-2)bb'}.
\end{align*}
\end{lemma}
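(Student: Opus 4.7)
The plan is to exploit the decomposition $J^d_{a,b}=b\cdot\mathbf{1}+(a-b)\cdot I_d$ already spelled out in the statement, where $\mathbf{1}$ denotes the $(d,d)$ all-ones matrix. The additivity identity is then immediate, since both $\mathbf{1}$ and $I_d$ are fixed and the coefficients $(a,b)$ add componentwise: $J^d_{a,b}+J^d_{a',b'}=(b+b')\mathbf{1}+\bigl((a+a')-(b+b')\bigr)I_d=J^d_{a+a',b+b'}$.

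For the product, the only arithmetic input I need beyond distributivity is the single identity $\mathbf{1}^2=d\cdot\mathbf{1}$, which follows from the definition of matrix multiplication of the all-ones matrix with itself. Expanding
\[
J^d_{a,b}\cdot J^d_{a',b'}=\bigl(b\,\mathbf{1}+(a-b)I_d\bigr)\bigl(b'\,\mathbf{1}+(a'-b')I_d\bigr)
\]
gives four terms: $bb'\mathbf{1}^2$, $b(a'-b')\mathbf{1}$, $(a-b)b'\mathbf{1}$, and $(a-b)(a'-b')I_d$. Substituting $\mathbf{1}^2=d\,\mathbf{1}$ and collecting the $\mathbf{1}$ and $I_d$ contributions, one obtains a matrix of the form $B\,\mathbf{1}+(A-B)I_d=J^d_{A,B}$ with $B$ equal to the coefficient of $\mathbf{1}$ and $A-B=(a-b)(a'-b')$. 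Reading off $B=ab'+a'b+(d-2)bb'$ and simplifying $A=(a-b)(a'-b')+B=aa'+(d-1)bb'$ yields the stated formula.

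The proof is therefore essentially a two-line computation, and I expect no real obstacle beyond bookkeeping of the coefficients; the only conceptual ingredient is the identity $\mathbf{1}^2=d\,\mathbf{1}$, which is entirely routine. I would present the argument as a single display expanding the product and then identifying the coefficients of $\mathbf{1}$ and $I_d$, to make the match with the $J^d_{A,B}$ normal form transparent.
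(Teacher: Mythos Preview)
Your proof is correct and follows exactly the approach the paper implicitly intends: the decomposition $J^d_{a,b}=b\,\mathbf{1}+(a-b)I_d$ is stated immediately before the lemma, and the paper simply declares the lemma ``clear'' without further argument. Your expansion using $\mathbf{1}^2=d\,\mathbf{1}$ and identification of the $\mathbf{1}$- and $I_d$-coefficients is precisely the intended verification.
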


Let now $K^d_{a,b,c,d}$ be the $(d^2,d^2)$-matrix defined as the following
$(d,d)$-blocks of $(d,d)$-matrices:
$$
K^d_{a,b,c,d}:=\left(\begin{matrix}J^d_{a,b} & & \text{\Huge{$J^d_{c,d}$}}\\ & \ddots & \\
\text{\Huge{$J^d_{c,d}$}} & &J^d_{a,b}\end{matrix}\right)
$$

The following lemma follows easily from Lemma \ref{lem:Jab}:
\begin{lemma}\label{lem:Mabcd} The following identity holds:
$$
K^d_{a,b,c,d}\cdot K^d_{a',b',c',d'}=K^d_{\alpha,\beta,\gamma,\delta}
$$
where:
\begin{align*}
\alpha&=aa'+(d-1)(bb'+cc')+(d-1)^2dd'; \\
\beta&= ab'+a'b+(d-1)(cd'+c'd)+(d-2)bb'+(d-1)(d-2)dd';\\
\gamma&=ac'+a'c+(d-1)(bd'+b'd)+(d-2)cc'+(d-1)(d-2)dd';\\
\delta&=ad'+a'd+bc'+b'c+(d-2)(cd'+c'd+bd'+b'd)+(d-2)^2dd'.
\end{align*}
\end{lemma}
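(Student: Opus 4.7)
The plan is to exploit the recursive block structure of $K^d_{a,b,c,d}$: it has the same shape as $J^d_{a,b}$ but with the scalar diagonal entry $a$ replaced by the $(d,d)$-block $J^d_{a,b}$ and the scalar off-diagonal entry $b$ replaced by the block $J^d_{c,d}$. Since each such block is a linear combination of $I_d$ and the all-ones $(d,d)$-matrix, all four blocks $J^d_{a,b}, J^d_{c,d}, J^d_{a',b'}, J^d_{c',d'}$ lie in a common commutative subalgebra. Consequently the multiplication formula of Lemma \ref{lem:Jab} applies verbatim at the block level.

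First, I would compute the product blockwise. Setting $A=J^d_{a,b}$, $B=J^d_{c,d}$, $A'=J^d_{a',b'}$, $B'=J^d_{c',d'}$ and expanding $(K^d_{a,b,c,d}\cdot K^d_{a',b',c',d'})_{ij}=\sum_{k=1}^d K_{ik}K'_{kj}$ separately for $i=j$ (one ``$AA'$'' term plus $d-1$ ``$BB'$'' terms) and for $i\neq j$ (one ``$AB'$'' term, one ``$BA'$'' term, and $d-2$ ``$BB'$'' terms), one obtains that the product is again of the form $K^d$, with diagonal block $AA'+(d-1)BB'$ and off-diagonal block $AB'+BA'+(d-2)BB'$. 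This is exactly the blockwise analogue of Lemma \ref{lem:Jab}, which is legitimate because of the commutativity noted above.

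Second, I would apply Lemma \ref{lem:Jab} scalarly to each of the four block products $AA'$, $BB'$, $AB'$, $BA'$ to rewrite them as individual matrices of the form $J^d_{?,?}$, and then use the additivity part of Lemma \ref{lem:Jab} to collapse the diagonal and off-diagonal blocks into $J^d_{\alpha,\beta}$ and $J^d_{\gamma,\delta}$ respectively. Reading off the coefficients of $1$ and of $d-1$, $d-2$ that arise from the block structure yields precisely the stated expressions for $\alpha,\beta,\gamma,\delta$. I do not expect any conceptual obstacle: once the block-level reduction is in place, the remainder is a routine bookkeeping exercise of substituting the scalar formula of Lemma \ref{lem:Jab} four times and grouping like terms.
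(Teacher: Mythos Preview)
Your proposal is correct and is essentially the approach the paper has in mind: the paper simply states that the lemma ``follows easily from Lemma~\ref{lem:Jab}'', and what you describe is precisely the natural way to make that precise---apply the $J^d$-multiplication rule at the block level to see the product is again of $K^d$-type, then apply Lemma~\ref{lem:Jab} to the four scalar products $AA',BB',AB',BA'$ and collect terms. One small remark: the commutativity of the blocks, while true, is not actually needed for the block-level step, since the off-diagonal block of the product is $AB'+BA'+(d-2)BB'$ directly from the index computation, which already has the symmetric form of Lemma~\ref{lem:Jab}.
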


Set $K_d:=K^d_{1,1,1,0}$. Its minimal polynomial $\mu_{K_d}(t)$ is given by:
\begin{lemma}\label{lem:muKd}
$\mu_{K_d}(t)=(t-(d-1))\cdot(t-(2d-1))\cdot(t+1)$.
\end{lemma}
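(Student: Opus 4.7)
The plan is to recognize $K_d$ as a sum of two commuting matrices built from Kronecker products, diagonalize them simultaneously, and read off the spectrum; then use that $K_d$ is real symmetric to conclude the minimal polynomial has simple roots.

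First I would decompose $K_d$ explicitly. Writing $E_d := J^d_{1,1}$ for the $d \times d$ all-ones matrix and noting that $J^d_{1,0} = I_d$, the block description of $K_d = K^d_{1,1,1,0}$ gives
\[
K_d = (E_d - I_d) \otimes I_d + I_d \otimes E_d,
\]
since the first summand has $(i,j)$-block equal to $I_d$ for $i \neq j$ and $0$ on the diagonal, while the second has $E_d$ on the diagonal and $0$ elsewhere, and their sum reproduces the block pattern of $K_d$.

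Second, the two summands commute (the standard fact that $A \otimes I$ and $I \otimes B$ always commute), hence are simultaneously diagonalizable. Now $E_d$ has eigenvalues $d$ (one-dimensional eigenspace, spanned by the all-ones vector) and $0$ (with multiplicity $d-1$), so $E_d - I_d$ has eigenvalues $d-1$ and $-1$ with the same eigenspaces. A basis of joint eigenvectors of the two summands is given by tensors $v \otimes w$, and the eigenvalues of $K_d$ are the sums $\alpha + \beta$ with $\alpha \in \{d-1,-1\}$ an eigenvalue of $E_d - I_d$ and $\beta \in \{d,0\}$ an eigenvalue of $E_d$. This produces exactly three distinct values $2d-1$, $d-1$, and $-1$ (the middle value arising twice, via $(d-1)+0$ and via $(-1)+d$).

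Third, since both blocks $J^d_{1,1}$ and $J^d_{1,0}$ are symmetric and the block placement is itself symmetric, $K_d$ is real symmetric and therefore diagonalizable. Its minimal polynomial thus has simple roots at the three distinct eigenvalues, yielding the stated factorization. There is no serious obstacle; the entire content is the tensor identification in the first step. An alternative, staying strictly within the algebra of $K^d_{a,b,c,d}$-matrices, would compute $K_d^2$ and $K_d^3$ via Lemma \ref{lem:Mabcd} and match the four scalar parameters of $K_d^3 + \gamma_2 K_d^2 + \gamma_1 K_d + \gamma_0 I_{d^2}$ (using $I_{d^2} = K^d_{1,0,0,0}$) against zero to extract the coefficients, but this is more arithmetically heavy and offers less insight than the tensor argument.
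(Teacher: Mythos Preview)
Your argument is correct, and it takes a genuinely different route from the paper. The paper stays inside the algebra of the $K^d_{a,b,c,d}$-matrices: it writes $K_d-(d-1)I_{d^2}=K^d_{-d+2,1,1,0}$, $K_d-(2d-1)I_{d^2}=K^d_{-2d+2,1,1,0}$, $K_d+I_{d^2}=K^d_{2,1,1,0}$, then uses Lemma~\ref{lem:Mabcd} to multiply these three matrices pairwise (obtaining nonzero results, so no quadratic factor suffices) and finally all three together (obtaining $K^d_{0,0,0,0}=0$). This is essentially the ``alternative'' you sketched at the end, except that the paper multiplies the shifted factors directly rather than expanding in powers of $K_d$.

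Your Kronecker decomposition $K_d=(E_d-I_d)\otimes I_d+I_d\otimes E_d$ is more conceptual: it explains \emph{why} the eigenvalues are $2d-1$, $d-1$, $-1$, and it delivers the multiplicities $1$, $2(d-1)$, $(d-1)^2$ as a free byproduct, which the paper establishes separately (and by ``quite long direct computation'') in Lemma~\ref{lem:dim}. The paper's approach, in exchange, requires no outside machinery and reuses the product formula already set up in Lemma~\ref{lem:Mabcd}.
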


\begin{proof}
Note that:
\begin{align*}
K_d-(d-1)I_d&=K^d_{-d+2,1,1,0};\\
K_d-(2d-1)I_d&=K^d_{-2d+2,1,1,0};\\
K_d+I_d&=K^d_{2,1,1,0}.
\end{align*}
Applying Lemma \ref{lem:Mabcd} one gets:
\begin{align*}
K^d_{-d+2,1,1,0}\cdot K^d_{-2d+2,1,1,0}&=K^d_{2(d-1)^2,-2d+2,-2d+2,2};\\
K^d_{-d+2,1,1,0}\cdot K^d_{2,1,1,0}&=K^d_{2,2,2,2};\\
K^d_{-2d+2,1,1,0}\cdot K^d_{2,1,1,0}&=K^d_{-2d+2,-d+2,-d+2,2};\\
K^d_{-d+2,1,1,0}\cdot K^d_{-2d+2,1,1,0}\cdot K^d_{2,1,1,0}&=K^d_{0,0,0,0}=0.
\end{align*}
\end{proof}

For $\lambda\in \{d-1,2d-1,-1\}$, we denote by $V(\lambda)$ the eigenspace of
$K_d$ associated to the eigenvalue $\lambda$. One computes:
\begin{lemma}\label{lem:dim}
$$
\dim V(2d-1)=1;\quad \dim V(-1)=(d-1)^2;\quad \dim V(d-1)=2(d-1).
$$
\end{lemma}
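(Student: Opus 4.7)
The plan is to diagonalize $K_d$ explicitly by recognizing its tensor-product structure. I would identify $\IC^{d^2}$ with $\IC^d\otimes\IC^d$---the first factor indexing the $d$ row-blocks of $K_d$, the second indexing positions inside a block---and then establish the identity
$$K_d = J\otimes I_d + I_d\otimes(J-I_d),$$
where $J:=J^d_{1,1}$ is the all-ones $d\times d$ matrix. The verification is direct: for a general $v=\sum_j e_j\otimes v_j$, the $i$-th block of $K_d v$ is $J v_i + \sum_{j\neq i} v_j = (J-I_d)v_i + \sum_j v_j$, which matches the $i$-th block of $(J\otimes I_d + I_d\otimes(J-I_d))v$.

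Next I would use that $J$ has rank $1$ and trace $d$, so its spectrum is $\{d,0\}$ with eigenspaces $\IC\cdot\mathbf{1}$ (dimension $1$) and $\mathbf{1}^{\perp}$ (dimension $d-1$). For eigenvectors $u,v$ of $J$ with eigenvalues $\mu,\nu\in\{0,d\}$, the tensor $u\otimes v$ is an eigenvector of $K_d$ with eigenvalue $\mu+\nu-1$. Since these tensors span $\IC^d\otimes\IC^d$, they exhaust the spectrum of $K_d$.

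Finally I would enumerate the four choices of $(\mu,\nu)\in\{d,0\}^2$: the case $(d,d)$ yields eigenvalue $2d-1$ with multiplicity $1$; the cases $(d,0)$ and $(0,d)$ both yield eigenvalue $d-1$ and contribute a total multiplicity of $2(d-1)$; and $(0,0)$ yields eigenvalue $-1$ with multiplicity $(d-1)^2$. These match the three asserted dimensions, and they correctly sum to $d^2$. No serious obstacle arises; the only substantive bookkeeping is the tensor-product expression for $K_d$. A self-contained alternative staying entirely inside the formalism of the appendix would be to use Lemma~\ref{lem:Mabcd} to compute $K_d^2=K^d_{2d-1,d,d,2}$, read off $\mathrm{tr}(K_d)=d^2$ and $\mathrm{tr}(K_d^2)=d^2(2d-1)$, combine these with $\dim V(d-1)+\dim V(2d-1)+\dim V(-1)=d^2$ (diagonalizability from Lemma~\ref{lem:muKd}), and solve the resulting $3\times 3$ linear system.
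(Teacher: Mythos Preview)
Your argument is correct. The tensor identity $K_d=J\otimes I_d+I_d\otimes(J-I_d)$ is easily verified as you indicate, and since $J$ is diagonalizable with spectrum $\{d,0\}$ and multiplicities $1,d-1$, the eigenvalues $\mu+\nu-1$ of $K_d$ and their multiplicities follow at once. Your alternative trace computation is also correct: Lemma~\ref{lem:Mabcd} indeed gives $K_d^2=K^d_{2d-1,d,d,2}$, the two traces are $d^2$ and $d^2(2d-1)$, and the Vandermonde system in the three multiplicities has the unique solution $(1,2(d-1),(d-1)^2)$.

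This is a genuinely different route from the paper. The paper simply says that $\dim V(2d-1)=1$ and $\dim V(-1)=(d-1)^2$ are obtained by a ``(quite long) direct computation'' of the kernels of $K_d-(2d-1)I$ and $K_d+I$, and then deduces $\dim V(d-1)=2(d-1)$ from diagonalizability (Lemma~\ref{lem:muKd}) and the fact that the three dimensions sum to $d^2$. Your tensor-product approach is more conceptual: it explains \emph{why} the eigenvalues are $2d-1$, $d-1$, $-1$ (they are $\mu+\nu-1$ for $\mu,\nu\in\{d,0\}$) and delivers all three multiplicities simultaneously, avoiding any lengthy kernel computation; it also renders Lemma~\ref{lem:muKd} itself a corollary rather than a prerequisite. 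The trace alternative stays closer to the appendix's formalism and is likewise shorter than the paper's direct computation. The paper's approach, by contrast, requires no structural insight and can in principle be carried out mechanically.
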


\begin{proof}
The first two results are a (quite long) direct computation. One deduces the
third one using that $K_d$ is diagonalizable (Lemma \ref{lem:muKd}).
\end{proof}

\begin{remark}\label{rem:invers}
Since $K^d_{\lambda,1,1,0}=K_d-(1-\lambda)I_d$, the matrix
$K^d_{\lambda,1,1,0}$ is invertible when $1-\lambda$ is not an eigenvalue of
$K_d$. By Lemma \ref{lem:muKd} this is $\lambda\notin\{-d+2,-2d+2,2\}$. For
$\lambda=-d+2$, one has:
$$
\rank K^d_{-d+2,1,1,0}=d^2-\dim V(d-1)=(d-1)^2+1.
$$
\end{remark}

\section{Results on Kummer surfaces}\label{s:Kummer}

We recall some classical facts from \cite{Inose,ShapiroShafarevich,InoseShioda, ShiodaMitani}. If $S$ is a K3 surface with Picard number $20$, we denote by $T_S$ the transcendental lattice and $Q_S$ the intersection matrix of $T_S$ with respect to an oriented basis. Let $\cQ$ be the set of positive definite, even integral $2\times 2$ matrices. The class $[Q_S]\in\cQ/\SL_2(\IZ)$ is uniquely determined by $S$ and $\det\NS(S)=-\det Q_S$.

For $S_\phi$, let $\sigma$ be the involution $(x:y:z:t)\mapsto(x:y:-z:-t)$. Then the minimal resolution of $S_\phi/\sigma$ is isomorphic to the Kummer surface $Y:=\Km(E_\phi\times E_\phi)$ and:
$$
Q_{S_\phi}=2Q_Y=4Q_A
$$
where $A:=E_\phi\times E_\phi$ and $Q_A$ is the binary quadratic form associated to $A$ as in \cite{ShiodaMitani}.

For $\lambda=\frac{1+\ii\sqrt{3}}{2}$, the group of automorphisms of the elliptic curve $E_\phi$ fixing a point has order $6$ (since $j(\lambda)=0$) so $E_\phi\cong C_\tau:=\IC/\IZ+\tau\IZ$ with $\tau=\frac{-1+\ii\sqrt{3}}{2}$. By the construction of \cite{ShiodaMitani}, for $A=C_\tau\times C_\tau$, one has $Q_A=\left(\begin{matrix} 2 & 1\\1 & 2\end{matrix}\right)$ so $Q_{S_\phi}=\left(\begin{matrix} 8 & 4\\4& 8\end{matrix}\right)$ and $\det\NS(S_\phi)=-\det Q_{S_\phi}=-48$. Moreover, observe that for $A'=C_\tau\times C_{\tau'}$ with $\tau'=\ii\sqrt{3}$, one has $Q_{A'}=\left(\begin{matrix} 4 & 2\\2& 4\end{matrix}\right)$ so $S_\phi\cong\Km(A')$.

\begin{remark}
The same method has been used to compute the determinant of the N\'eron-Severi group of the Fermat quartic.
\end{remark}

\bibliographystyle{amsalpha}
\bibliography{BoissiereSartiPicardBib}
\end{document}